\newtheorem{theorem}{Theorem}[section]
\newtheorem{lem}[theorem]{Lemma}
\newtheorem{prop}[theorem]{Proposition}
\theoremstyle{definition}
\newtheorem{defi}[theorem]{Definition}
\newtheorem{remark}[theorem]{Remark}
\newcommand{\R}{\ensuremath{\mathbb{R}}}
\newcommand{\Z}{\ensuremath{\mathbb{Z}}}
\newcommand{\Col}{\mathrm{Col}}
\newcommand{\mK}{\mathcal{K}}
\newcommand{\mT}{\mathcal{T}}
\newcommand{\mD}{\mathcal{D}}
\newcommand{\mS}{\mathcal{S}}
\newcommand{\mP}{\mathcal{P}}
\newcommand{\sD}{\mathscr{D}}
\begin{document}

%%%%%%%%%%%%%%%%%%%%%%%%%
% Subject classification 
%%%%%%%%%%%%%%%%%%%%%%%%%

\renewcommand{\subjclassname}{%
    \textup{2020} Mathematics Subject Classification}

\subjclass[2020]{Primary 57K45; Secondary 57K12.}
\date{\today}
\keywords{Bridge number, bridge trisection, surface link, kei, coloring}

%%%%%%%%%
% Title
%%%%%%%%%

\title[The bridge number of surface links and kei colorings]
{The bridge number of surface links and kei colorings}

%%%%%%%%%%%%%%%%%%%%%%%%%%%%%%
% Author names and addresses 
%%%%%%%%%%%%%%%%%%%%%%%%%%%%%%

\author{Kouki Sato}
\address{Graduate School of Mathematical Sciences, 
University of Tokyo, 3-8-1 Komaba Meguro, 
Tokyo 153-8914, Japan}
\email{sato.kouki@mail.u-tokyo.ac.jp}

\author{Kokoro Tanaka}
\address{Department of Mathematics, Tokyo Gakugei University, 
Nukuikita 4-1-1, 
Koganei, Tokyo 184-8501, Japan}
\email{kotanaka@u-gakugei.ac.jp}

%%%%%%%%%%%%%
% Dedication
%%%%%%%%%%%%%

%\dedicatory{Dedicated to Professor Yukio Matsumoto on the occasion of his 60th birthday}

%%%%%%%%%%%%%
% Abstract 
%%%%%%%%%%%%%

\begin{abstract}
Meier and Zupan introduced bridge trisections of surface links in $S^4$
as a 4-dimensional analogue to bridge decompositions of classical links, which gives a
numerical invariant of surface links called {\it the bridge number}.
We prove that there exist infinitely many surface knots with bridge number $n$ 
for any integer $n \geq 4$.
To prove it, we use colorings of surface links by keis
and give lower bounds for the bridge number of surface links.
\end{abstract}

\maketitle

%%%%%%%%%%%%%%%%%%%%%%%%%%%%%%%%%%%%%%%%%%%%%%%%%%%%%%%%%%%%%%%%%%%%%%%%%
% end Topmatter
%%%%%%%%%%%%%%%%%%%%%%%%%%%%%%%%%%%%%%%%%%%%%%%%%%%%%%%%%%%%%%%%%%%%%%%%%

%%%%%%%%%%%%%%%%%%%%%%%%%%%%%%%%%%%%%%%%%%%%%%%%%%%%%%%%%%%%%%%%%%%%%%%%%
% body of paper
%%%%%%%%%%%%%%%%%%%%%%%%%%%%%%%%%%%%%%%%%%%%%%%%%%%%%%%%%%%%%%%%%%%%%%%%%
%\vspace{20pt}

\section{Introduction}

A {\it surface link} is a smoothly embedded closed %, connected %, oriented 
surface %embedded locally flatly  
in $S^4$, which may be disconnected or non-orientable.
A surface link is called a {\it surface knot} if it is connected.
In \cite{MZ17}, Meier and Zupan introduced {\it bridge trisections} of surface links as a 4-dimensional analogue to bridge decompositions of classical links. 
Moreover, any bridge trisection is associated to a {\it tri-plane diagram},
which consists of three diagrams for trivial 1-dimensional tangles. Using these notions, Meier and Zupan defined the {\it bridge number} as a new geometrical complexity of surface links.
Indeed, it is proved in \cite{MZ17} that if the bridge number of a surface link $\mK$ is equal or less than $3$, then $\mK$ is one of trivially embedded $S^2$, $\R P^2$ and $T^2$.

One of fundamental problems about the bridge number is whether there exists a surface link  with bridge number $n$ for any positive integer $n$. 
For the cases where $n \equiv 1 \mod 3$, this problem is resolved affirmatively in \cite{MZ17}, while it remained open in general. The aim of this paper is to resolve this  problem affirmatively for general cases.
\begin{theorem}
\label{thm: main}
For any integer $n \geq 4$, there exist infinitely many distinct surface knots with bridge number $n$.
\end{theorem}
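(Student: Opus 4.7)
The plan is to combine a coloring-theoretic lower bound on the bridge number with an explicit construction of surface knots realizing each target value $n$, with enough flexibility to produce infinitely many pairwise distinct examples for each $n$.

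First, I would prove a key lemma bounding kei colorings by the bridge number: for any finite kei $Q$ and any surface link $\mK$,
\[
|\Col_Q(\mK)| \leq |Q|^{b(\mK)},
\]
where $b(\mK)$ denotes the bridge number. The idea is that any $Q$-coloring of a tri-plane diagram is determined by its restriction to any one of the three constituent trivial tangle diagrams, and such a tangle consists of at most $b(\mK)$ arcs, each of which independently carries a single element of $Q$. Rearranging gives the usable inequality $b(\mK) \geq \log_{|Q|}|\Col_Q(\mK)|$.

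Next, for each $n \geq 4$ I would produce an explicit family $\{\mK_n^{(m)}\}_{m \in \N}$ of surface knots, taking twist-spun (or more general spun) versions of carefully chosen classical knots whose Fox-type kei-coloring behavior can be computed. Since the fundamental kei of such a twist-spin is directly accessible from that of the underlying classical knot, $|\Col_Q(\mK_n^{(m)})|$ can be made explicit for suitable $Q$. An explicit tri-plane diagram, built from a classical bridge decomposition of the underlying knot, furnishes the upper bound $b(\mK_n^{(m)}) \leq n$. Applying the key lemma with $Q$ chosen so that $|\Col_Q(\mK_n^{(m)})| > |Q|^{n-1}$ provides the matching lower bound, forcing $b(\mK_n^{(m)}) = n$. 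Varying the parameter $m$ (for instance, by changing a twist integer or a prime modulus) leaves the bridge number unchanged but alters the surface-knot type; the members of the family can then be pairwise distinguished by finer coloring counts or by the fundamental group.

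The main obstacle will be calibrating the construction so that the explicit tri-plane upper bound and the kei coloring lower bound meet for every residue of $n$ modulo $3$, and in particular for the residues $n \equiv 0, 2 \pmod 3$ not already handled by Meier--Zupan. The arithmetic constraints inherent in tri-plane diagrams (whose total arc count is naturally a multiple of $3$) make it delicate to exhibit candidates whose coloring counts fall in the narrow window $\bigl(|Q|^{n-1}, |Q|^n\bigr]$ required to pin the bridge number down to exactly $n$, and doing so uniformly in $n$ will require flexible families of keis rather than a single fixed coloring target.
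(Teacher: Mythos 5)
Your overall strategy (a kei-coloring lower bound for the bridge number, applied to twist spins of classical knots with computable coloring counts) is the same as the paper's, but your key lemma is too weak, and the gap is fatal: the approach as written provably cannot produce any example with $n \geq 4$. Your bound $b(\mK) \geq \log_{\# X}(\# \Col_X(\mK))$ is correct (a coloring of a tri-plane diagram is indeed determined by its restriction to one trivial $b$-strand tangle, which admits $(\# X)^b$ colorings), but it throws away most of the structure. The paper's sharper observation is that the union $\mP_{ij} \cup \overline{\mP_{ki}}$ is a diagram of the \emph{$c_i$-component unlink}, which has only $(\# X)^{c_i}$ colorings, and the restriction of a tri-plane coloring to this union is still injective; hence $\log_{\# X}(\# \Col_X(\mK)) \leq \min\{c_1,c_2,c_3\}$. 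Combining this with the Euler characteristic identity $\chi(\mK) = -b + (c_1+c_2+c_3)$ coming from the cell decomposition of a bridge trisection yields $b(\mK) \geq 3\log_{\# X}(\# \Col_X(\mK)) - \chi(\mK)$, which is stronger than your bound by essentially a factor of $3$.

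That factor of $3$ is exactly what is needed. The natural upper bound for a $2m$-twist spin is $b(\mS_{2m}(K)) \leq 3b(K) - 2$ (from Meier--Zupan's explicit trisection), while $\# \Col_X(\mS_{2m}(K)) = \# \Col_X(K)$ for faithful $X$; for $K = \#_k T_{2,q}$ this count is $q^{k+1}$ with $b(K) = k+1$. Your bound then gives only $b(\mS_{2m}(K)) \geq k+1$, nowhere near $3k+1$. Worse, the refined inequality shows that for any surface knot homeomorphic to $S^2$ one always has $\log_{\# X}(\# \Col_X(\mK)) \leq (b(\mK)+2)/3$, so the window $\# \Col_X(\mK) > (\# X)^{\,n-1}$ you need to pin down $b = n$ is empty once $n \geq 3$; no choice of kei or of example can rescue the argument with your lemma. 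You also do not address the congruence $b(\mK) \equiv -\chi(\mK) \pmod 3$: spheres only realize $n \equiv 1$, so to hit $n \equiv 0, 2 \pmod 3$ you must stabilize by a trivial torus or projective plane (which changes $\chi$ but not the coloring count) rather than hope to calibrate sphere examples. Finally, varying the twist parameter $m$ does not distinguish the resulting surface knots by colorings (the coloring set depends only on the parity of $m$); the paper instead varies $q$ and separates the examples by their $R_q$-coloring numbers.
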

Here we remark that the bridge number of a surface link $\mK$ is congruent to 
$-\chi(\mK)$ modulo 3, where $\chi(\mK)$ is the Euler characteristic of $\mK$.
Any of our examples for proving Theorem~\ref{thm: main} is homeomorphic to one of $S^2$, $\R P^2$ and $T^2$.
To prove Theorem~\ref{thm: main}, we introduce {\it kei colorings} of tri-plane diagrams,
and show that such colorings coincide with the original kei colorings of surface links.
As its application, if we consider a finite kei, we have the following lower bound for the bridge number.

\begin{theorem}
\label{thm: lower bound}
For any surface link $\mK$ and finite kei $X$, we have the inequality
\[
b(\mK) \geq 3 \log_{\# X} (\# \Col_X(\mK)) -\chi(\mK),
\]
where $b(\mK)$ is the bridge number of $\mK$ and $\# \Col_X(\mK)$
the number of $X$-colorings of $\mK$.
\end{theorem}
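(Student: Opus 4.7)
The plan is to bound $\#\Col_X(\mK)$ using a bridge trisection of $\mK$ realizing $b(\mK)$, and then combine this with the Euler-characteristic formula expressing $b(\mK)$ in terms of the sizes of the three disk systems.

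Fix a $(b; c_1, c_2, c_3)$-bridge trisection of $\mK$ with $b = b(\mK)$, so $S^4 = Z_1 \cup Z_2 \cup Z_3$, $\mK \cap Z_i$ is a trivial system of $c_i$ disks, and the three pieces are glued along trivial tangles $\tau_{12}, \tau_{13}, \tau_{23}$ with $b$ arcs each, meeting at $2b$ bridge points on the central $2$-sphere $\Sigma$. An inclusion--exclusion computation of the Euler characteristic gives
\[
\chi(\mK) \;=\; (c_1+c_2+c_3) - 3b + 2b \;=\; c_1+c_2+c_3 - b,
\]
so $c_1 + c_2 + c_3 = b(\mK) + \chi(\mK)$.

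By the authors' identification of $X$-colorings of $\mK$ with $X$-colorings of its tri-plane diagram, a coloring assigns an element of $X$ to each arc of each tangle, matching at the $2b$ bridge points. For each disk $D$ among the $c_i$ disks in $Z_i$, the boundary circle $\partial D$ is a union of arcs from $\tau_{ij}$ and $\tau_{ik}$ meeting at bridge points; the matching condition forces these arcs to share a single color, so $D$ carries a well-defined label in $X$. This yields a map
\[
\beta_i : \Col_X(\mK) \longrightarrow X^{c_i}.
\]
The key step is to show each $\beta_i$ is injective. Knowing the $c_i$ disk labels determines the color of every arc in $\tau_{ij} \cup \tau_{ik}$, since each such arc lies on the boundary of a unique disk in $Z_i$; this in turn determines the colors at all $2b$ bridge points. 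The remaining tangle $\tau_{jk}$ is trivial, so any $X$-coloring of $\tau_{jk}$ is uniquely determined by its endpoint values (each arc's color equals the color at either endpoint). Consequently the whole coloring is recovered, giving $\#\Col_X(\mK) \leq (\#X)^{c_i}$ for each $i$. Multiplying the three inequalities yields $\#\Col_X(\mK)^3 \leq (\#X)^{c_1+c_2+c_3} = (\#X)^{b(\mK)+\chi(\mK)}$, and taking $\log_{\#X}$ gives the theorem.

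The main obstacle is the injectivity of $\beta_i$; it hinges on the triviality of every tangle in a bridge trisection, which ensures that each tangle's coloring is freely recovered from its endpoint data. This same feature is what underlies the authors' equivalence between kei colorings of tri-plane diagrams and kei colorings of surface links invoked above.
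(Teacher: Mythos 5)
Your overall strategy is the same as the paper's: bound $\#\Col_X(\mK)$ by $(\#X)^{c_i}$ for each $i$ using the triviality of the tangles, and combine this with the count $\chi(\mK)=c_1+c_2+c_3-b$ coming from the cell decomposition. (Multiplying the three bounds rather than passing to $\min\{c_1,c_2,c_3\}$ is a harmless cosmetic difference.) However, there is a genuine gap in the construction of $\beta_i$. The matching condition in the definition of a kei coloring of a tri-plane diagram only constrains the colors of the arcs \emph{containing the bridge points}; it does not force all arcs along the boundary circle of a disk $D$ to share one color. A single strand of the trivial tangle $\alpha_{ij}$ may be cut into several arcs by crossings of the diagram $\mP_{ij}$, and these arcs can carry different colors, so the two endpoints of that strand can be colored differently. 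Indeed, $\mP_{ij}\cup\overline{\mP_{ki}}$ is merely some (generally crossed) diagram of a $c_i$-component unlink, and colorings of such a diagram need not be constant on components: already for a two-component unlink drawn with one component passing under the other twice, one component receives colors $x$ and $x*y$, which differ in general. Consequently $\beta_i$ is not well defined, and the companion claim in your injectivity step --- that the disk labels determine the color of \emph{every} arc of $\tau_{ij}\cup\tau_{ik}$ --- fails for the same reason.

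The inequality $\#\Col_X(\mK)\le(\#X)^{c_i}$ is nevertheless true, and the paper's proof shows how to repair your argument: instead of extracting a label per disk, map a coloring of the tri-plane diagram to the induced coloring of the unlink diagram $\mP_{ij}\cup\overline{\mP_{ki}}$, i.e.\ into $\Col_X(L_{c_i})$ rather than into $X^{c_i}$. Since the number of colorings is invariant under Reidemeister moves, $\#\Col_X(L_{c_i})=(\#X)^{c_i}$ regardless of crossings, and injectivity follows exactly as in your last step from the fact that a coloring of the remaining trivial tangle $\mP_{jk}$ is determined by its endpoint colors (which are part of the data of the colorings of $\mP_{ij}$ and $\mP_{ki}$). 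With that substitution your proof goes through.
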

For the $m$-twist-spinning $\mS_{m}(K)$ of a classical knot $K$, 
the relation between $\# \Col_X(\mS_{m}(K))$ and $\# \Col_X(K)$ is studied 
by Asami and Satoh \cite{AS05}. Their study enables us to give a sufficient condition for the equality in Theorem~\ref{thm: lower bound} for $m$-twist spun knots and their stabilizations.
Here, we denote by $P$ either one of the two trivially embedded $\R P^2$'s in $S^4$ (with normal Euler number $\pm 2$) and by $T$ a trivially embedded torus in $S^4$.
For any integer $p \geq 2$, we denote by $R_p$ the dihedral kei of order $p$.
(Note that the coloring by $R_p$ is coincident with the Fox {\it $p$-coloring} 
\cite{Fox, Fox2} for classical links.)
\begin{theorem}
\label{thm: twist spun}
For a classical knot $K$, suppose that there exists an odd integer $p > 1$ with $b(K) = \log_{\#R_p}(\# \Col_{R_p}(K))$, where $b(K)$ denotes the bridge number of $K$.
Then for any $m \in \Z$, we have the following equalities:
\begin{itemize}
\item 
$b(\mS_{2m}(K) ) = 3 b(K) - 2$,
\item
$b(\mS_{2m}(K) \# P) = 3 b(K) -1$, and
\item
$b(\mS_{2m}(K) \# T) = 3 b(K)$.
\end{itemize}
\end{theorem}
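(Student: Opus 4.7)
The plan is to establish matching lower and upper bounds for each of the three equalities. Theorem~\ref{thm: lower bound}, combined with a computation of $\# \Col_{R_p}$ for the three surface knots, supplies the lower bounds, and explicit bridge trisection constructions supply the upper bounds.

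For the lower bounds, I first compute $\# \Col_{R_p}$ of each of the three surface knots. Since $R_p$ is a kei, the involutory identity forces the $2m$-fold twist to act trivially on any $R_p$-coloring, and the Asami--Satoh analysis \cite{AS05} therefore yields $\# \Col_{R_p}(\mS_{2m}(K)) = \# \Col_{R_p}(K)$, which equals $p^{b(K)}$ by hypothesis. The surfaces $P$ and $T$ are trivially embedded, so their fundamental keis admit only constant $R_p$-colorings and hence $\# \Col_{R_p}(P) = \# \Col_{R_p}(T) = p$. The standard connect-sum formula for kei colorings,
\[
\# \Col_{R_p}(\mK_1 \# \mK_2) \;=\; \# \Col_{R_p}(\mK_1) \cdot \# \Col_{R_p}(\mK_2) / p,
\]
then gives $\# \Col_{R_p}(\mS_{2m}(K) \# P) = \# \Col_{R_p}(\mS_{2m}(K) \# T) = p^{b(K)}$. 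Substituting these values together with $\chi(\mS_{2m}(K)) = 2$, $\chi(\mS_{2m}(K) \# P) = 1$, and $\chi(\mS_{2m}(K) \# T) = 0$ into Theorem~\ref{thm: lower bound} produces the lower bounds $3b(K) - 2$, $3b(K) - 1$, and $3b(K)$ respectively.

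For the upper bounds, I would appeal to Meier--Zupan's construction of a bridge trisection of the $m$-twist-spun knot with $3b(K) - 2$ bridges, so that $b(\mS_{2m}(K)) \leq 3b(K) - 2$. Combining this with the subadditivity $b(\mK_1 \# \mK_2) \leq b(\mK_1) + b(\mK_2) - 1$, where the $-1$ accounts for the bridge number $1$ of the unknotted $S^2$ along which the sum is performed, and with $b(P) = 2$ and $b(T) = 3$ (forced by the congruence $b \equiv -\chi \pmod{3}$ and Meier--Zupan's classification of surface links with $b \leq 3$), yields the remaining upper bounds $3b(K) - 1$ and $3b(K)$.

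The main obstacle will be pinning down the precise form of the Asami--Satoh identity that delivers $\# \Col_{R_p}(\mS_{2m}(K)) = \# \Col_{R_p}(K)$ for arbitrary $m$ and odd $p$, and verifying the connect-sum formula for kei colorings in the surface-link setting. Once these coloring computations and the trisection upper bounds are confirmed, each of the three equalities follows by squeezing its lower and upper bound together.
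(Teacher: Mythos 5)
Your proposal follows essentially the same route as the paper: lower bounds from Theorem~\ref{thm: lower bound} together with the Asami--Satoh coloring computation, and upper bounds from Meier--Zupan's $(3b(K)-2)$-bridge trisection of twist-spun knots plus $b(P)=2$, $b(T)=3$ and subadditivity of $b$ under connected sum. Two of the steps you flag as obstacles are exactly where the paper is more careful, and both resolve in your favor. First, the Asami--Satoh lemma computes $\Col_X(\sD_{2m}(K))$ in terms of colorings of the $1$-tangle diagram $T_K$, not of the closed knot diagram; in general $\#\Col_X(T_K)$ can exceed $\#\Col_X(K)$. The paper bridges this by observing that $R_p$ is \emph{faithful} for odd $p$, and that for faithful keis $\Col_X(T_K)=\Col_X(K)$ (Lemma~\ref{lem: faithful}); your argument needs this step (or the classical fact that the two endpoints of a $1$-tangle receive the same color under any Fox $p$-coloring) to convert the hypothesis on $\#\Col_{R_p}(K)$ into the statement about $\#\Col_{R_p}(\mS_{2m}(K))$. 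Second, the general connect-sum formula $\#\Col_X(\mK_1\#\mK_2)=\#\Col_X(\mK_1)\cdot\#\Col_X(\mK_2)/\#X$ is not something you should assume for arbitrary keis and surface links; the paper avoids it entirely by noting that $P$ and $T$ admit broken surface diagrams with a \emph{single} broken sheet, so summing with them manifestly leaves the set of colorings unchanged. With those two points pinned down, your squeeze between the matching bounds is exactly the paper's proof.
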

%%%%%%%%%%%%%%%%%%%%%%%%%%
%Now 
Theorem~\ref{thm: main} immediately follows from Theorem~\ref{thm: twist spun}. 
\begin{proof}[Proof of Theorem~\ref{thm: main}]
Since the $k$ times connected sum $\#_{k}T_{2,q}$ of the $(2,q)$-torus knot satisfies 
%$\# \Col_{R_q}(\#_kT_{2,q}) = q^{k+1}$, and hence 
$$b(\#_k T_{2,q}) = \log_q(\# \Col_{R_q}(\#_kT_{2,q}))= k+1, $$
for any odd $q > 1$, 
Theorem~\ref{thm: twist spun} can be applied for $\#_{k}T_{2,q}$. 
%\[b(\mS_{2m}( \#_{k}T_{2,q}) ) = 3k+1,\
%b(\mS_{2m}( \#_{k}T_{2,q}) \# P) = 3k+2,\ and 
%b(\mS_{2m}( \#_{k}T_{2,q}) \# T) = 3k+3. \]
Moreover, if an odd number $q'$ is less than $q$, 
then it follows from a direct calculation that $\# \Col_{R_q}(\#_kT_{2,q'}) < q^{k+1}$. 
Hence their $0$-twist spuns $\{\mS_{0}(\#_k T_{2,q})\}_{q > 1}$ 
are mutually distinct, since 
their coloring numbers are the same as those of the original classical knots, respectively. 
%
%Note that the $k$ times connected sum $\#_{k}T_{2,q}$ of the $(2,q)$-torus knot satisfies 
%$$b(\#_k T_{2,q}) = \log_q(\# \Col_{R_q}(\#_kT_{2,q}))= k+1$$
%for any odd $q > 1$, 
%and their untwist spuns $\{\mS_{0}(\#_k T_{2,q})\}_{q > 1}$ are mutually distinct. 
%(For instance, their Alexander polynomials are the same as those of 
%the original classical knots respectively, which are mutually distinct.) 
\end{proof}
We remark that Theorem~\ref{thm: twist spun} also holds even if we replace $R_p$
with any finite {\it faithful} kei. For more details, see Section~\ref{sec: twist spun}. 
Here we also mention Meier-Zupan's question \cite[Question~5.2]{MZ17}, which asks
whether the equality $b(\mS_{m}(K) ) = 3 b(K) - 2$ holds for any $K$ and $m \neq \pm 1$.
Theorem~\ref{thm: twist spun} gives a sufficient condition for the equality,
while it can be applied to only the cases where $m$ is even.
%%%%%%%%%%%%%%%%%%%%%%%%%%

Finally, we remark that if a given surface link is oriented, then we can define {\it quandle colorings} of its oriented tri-plane diagrams and can prove the coincidence of the colorings with the original quandle colorings. Such quandle colorings would be useful to approach
Meier-Zupan's question for odd $m$.

%%%%%
\section{Kei colorings of tri-plane diagrams}
In this section, we introduce {\it kei colorings} of tri-plane diagrams, and show that
such colorings coincide with  original kei colorings of surface links. 
As a consequence of those arguments, we prove Theorem~\ref{thm: lower bound}.

%%%
\subsection{A review of tri-plane diagrams}
We first recall {\it bridge trisections},  {\it the bridge number} and {\it tri-plane diagrams} of surface links. 

A {\it $0$-trisection} of $S^4$ is a decomposition 
$S^4 = X_1 \cup X_2 \cup X_3$,
such that
\begin{enumerate}
\item $X_i$ is a 4-ball,
\item $B_{ij} = X_i \cap X_j = \partial X_i \cap \partial X_j$ is a 3-ball, and
\item $\Sigma = X_1 \cap X_2 \cap X_3 = B_{12} \cap B_{23} \cap B_{31}$ is a 2-sphere.
\end{enumerate}
A {\it trivial $c$-disk system} is a pair $(X,\mD)$ where
$X$ is a 4-ball and $\mD$ is a collection of $c$ properly embedded disks 
in $X$ which are
simultaneously isotopic into the boundary of the 4-ball $X$.
Using these terminologies, a {\it bridge trisection} of a surface link is defined as follows.
\begin{defi}[\text{\cite[Definition~1,2]{MZ17}}]
A {\it $(b; c_1, c_2, c_3)$-bridge trisection} $\mT$ of a surface link 
$\mK \subset S^4$ is
a decomposition of the form 
$(S^4,\mK) = (X_1,\mD_1) \cup (X_2,\mD_2) \cup (X_3,\mD_3)$ such that
\begin{enumerate}
\item $S^4 = X_1 \cup X_2 \cup X_3$ is a $0$-trisection of $S^4$,
\item $(X_i, \mD_i)$ is a trivial $c_i$-disk system, and
\item $(B_{ij}, \alpha_{ij}) = (X_i,\mD_i) \cap (X_j ,\mD_j)$ is a $b$-strand trivial tangle.
\end{enumerate}
When appropriate, we simply refer to $\mT$ as a $b$-bridge trisection.
%If $c_i = c$ for all $i$, then we call $\mT$ a {\it $(b, c)$-bridge trisection}.
\end{defi}
It is proved in \cite{MZ17} that every surface link admits a bridge trisection.
The {\it bridge number} $b(\mK)$ of a surface link $\mK$ is defined by 
\[
b(\mK) = \min \{b \mid \text{$\mK$ admits a $b$-bridge trisection}\}.
\]

Next we recall {\it tri-plane diagrams}.
%we call the subset 
%$\mS = (B_{12}, \alpha_{12}) \cup (B_{23}, \alpha_{23}) \cup (B_{31}, \alpha_{31})$ 
%the {\it spine} of $\mT$.
For a $(b;c_1,c_2,c_3)$-bridge trisection $\mT$ of a surface link $\mK$, 
choose the orientation of $B_{ij}$ as a submanifold of $\partial X_i$. 
For each pair of indices, let $E_{ij} \subset B_{ij}$ be an embedded disk with the property that
$e = \partial E_{12} = \partial E_{23} = \partial E_{31}$. We call the union 
$E_{12} \cup E_{23} \cup E_{31}$ a {\it tri-plane}.
We also suppose that the points $\mathbf{p} = \mK \cap \Sigma$ lie in the curve 
$e = E_{12} \cap E_{23} \cap E_{31}$. Choose an orientation of
each $E_{ij}$ so that the $E_{ij}$ induce the same orientation of their common boundary $e$.
Then, a {\it tri-plane diagram} $\mP$ of $\mT$ is a triple of planar tangle diagrams $\mP=(\mP_{12}, \mP_{23}, \mP_{31})$ such that 
$\mP_{ij}$ is obtained as a diagram of $\alpha_{ij}$ in $E_{ij}$.
By definition, there is a canonical identification among $\partial \mP_{ij}$,
and if $\overline{\mP_{ki}}$ denote the mirror image of $\mP_{ki}$, then
$\mP_{ij} \cup \overline{\mP_{ki}}$ is a classical link diagram of a $c_i$-component trivial 
classical link, where the diagram lies in $S^2 = E_{ij} \cup (-E_{ki})$. 
Refer to \cite{MZ17} for more details.

%%%
\subsection{Kei colorings of tri-plane diagrams}
Now we introduce {\it kei colorings} of tri-plane diagrams. 

A {\it kei} \cite{Tak43} %{\it quandle} \cite{FR, Joy, Mat} 
is a non-empty set $X$ equipped with a binary 
operation $(a,b) \mapsto a*b$ such that 
\begin{itemize}
\item[(i)] $a*a=a$ for any $a\in X$, 
\item[(ii)] $(a*b)*b=a$ for each $a, b \in X$, and
%\item[(ii)] the map $*a : X \rightarrow X$
%$(x \mapsto x*a)$ is bijective for each $a\in X$, and
\item[(iii)] $(a*b)*c = (a*c)*(b*c)$ for any $a,b,c \in X$.
\end{itemize}
For examples, the {\it dihedral kei of order $p$}, denoted by $R_p$, 
is a kei consisting of the set $\{0,1,\ldots,p-1\}$ with 
the binary operation defined by
$i*j \equiv 2j-i \pmod p$.

Here we recall kei colorings of tangle diagrams.
For a given kei $X$, an {\it $X$-coloring} of a tangle diagram $T$ 
is an assignment of an element of $X$ to each arc in $T$ 
such that $a*b=c$ holds at each crossing, where $a$ and $c$ (resp.\ $b$)
are the colors of under-arcs (resp.\ the over-arc).
Note that $a*b=c$ is equivalent to $c*b=a$ by the axiom (ii) of a kei. 
%%%%%%%%
Any tangle diagram admits a \textit{trivial $X$-coloring} 
where all the arcs are assigned the same element.
%A coloring that assigns the same element of $X$ to all the arcs is called \textit{trivial}. 
%%%%%%%%
We denote by $\Col_{X}(T)$ the set of all $X$-colorings of $T$. 
Now, {\it kei colorings} of tri-plane diagrams are defined as follows.
Here recall that there is a canonical identification among $\partial \mP_{ij}$.
\begin{defi}
\label{def: kei col}
For a given kei $X$, an {\it $X$-coloring} of a tri-plane diagram 
$\mP=(\mP_{12}, \mP_{23}, \mP_{31})$ is a triple of $X$-colorings $C_{ij}$ of $\mP_{ij}$
such that for each end point $p \in \partial \mP_{ij}$, the arcs in $\mP_{ij}$ containing $p$
are colored by the same element.
\end{defi}
We denote by $\Col_{X}(\mP)$ the set of all $X$-colorings of $\mP$.

\begin{remark}
If a given surface link $\mK$ is oriented, then we can define an {\it oriented tri-plane diagram} of $\mK$ so that the orientation of $\mP_{ij}$ is induced from $\partial \mD_i$. Then, we can define {\it quandle colorings} of oriented tri-plane diagrams 
in a way similar to Definition~\ref{def: kei col}.
\end{remark}

%%%
\subsection{Relationship to original kei colorings}
Here we show that kei colorings of tri-plane diagrams can be identified with kei colorings of {\it broken surface diagrams}. 
As its application, we prove Theorem~\ref{thm: lower bound}.

For a given surface link $\mK \subset S^4$, take a point 
$p_{\infty} \in S^4 \setminus \mK$ and identify $\R^4$ with $S^4 \setminus \{p_{\infty}\}$.
A {\it broken surface diagram} of a surface link is a generic projection image 
in $\R^3$ where one of the two sheets near the double point 
curve is broken depending on the relative height.
This convention is similar to classical knot diagrams.
A broken surface diagram consists of {\it broken sheets}, that are mutually disjoint 
compact surfaces in $\R^3$. Any surface link admits a broken surface diagram, and any two broken surface diagrams of a surface link are related by a finite sequence of {\it Roseman moves}.
Refer to \cite{CS-book} for more details.

For a given kei $X$, an {\it $X$-coloring} of a broken surface diagram $\mathscr{D}$
is an assignment of an element of $X$ to each broken sheet in $\mathscr{D}$
such that $a*b=c$ holds along 
each double point curve, where $a$ and $c$ (resp.\ $b$)
are the colors of under-sheets (resp.\ the over-sheet). 
%%%%%%%%
Any diagram admits a \textit{trivial $X$-coloring} 
where all the broken sheets are assigned the same element.
%A coloring that assigns the same element of $X$ to all the arcs is called \textit{trivial}. 
%%%%%%%%
We denote by $\Col_X(\mathscr{D})$ the set of $X$-colorings of $\mathscr{D}$.
We remark that each Roseman move from $\mathscr{D}_1$ to $\mathscr{D}_2$ induces 
a bijection from $\Col_X(\mathscr{D}_1)$ to $\Col_X(\mathscr{D}_2)$; 
see, for example, \cite{CKS04, Ros98}.
In particular, for any finite kei $X$, 
the number of the $X$-colorings is an invariant of a surface link $\mK$. Hence we denote the value by $\# \Col_{X}(\mK)$.
%%%%%%%%
It holds that $\# \Col_{X}(\mK) = \# X$ for a trivial surface knot $\mathcal{K}$, 
since its trivial diagram admits only trivial $X$-colorings. 
%
%Since, for a trivial surface knot $\mathcal{K}$, its diagram admits only trivial $X$-colorings, 
%we have $\# \Col_{X}(\mK) = \# X$. 
%%%%%%%%

\begin{prop}
\label{prop: diagram}
For any tri-plane diagram $\mP$ of a surface link $\mK$,
there exists a broken surface diagram $\sD(\mP)$ 
of $\mK$ in $\R^4$ such that for any kei $X$,  we have a bijection
\[
\varphi \colon \Col_X(\mP) \to \Col_X(\sD(\mP)).
\]
In particular, if $X$ is finite, then  we have
\[
\# \Col_X(\mP) = \# \Col_X(\mK).
\] 
\end{prop}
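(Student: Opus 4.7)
The plan is to build an explicit broken surface diagram $\sD(\mP)$ of $\mK$ such that its broken sheets are in canonical bijection with equivalence classes of arcs of $\mP_{12}\sqcup\mP_{23}\sqcup\mP_{31}$ under the relation of sharing a bridge point; the bijection $\varphi$ will then transport a color on an equivalence class to the corresponding broken sheet.

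The first and main step is the construction. Embed the tri-plane $E_{12}\cup E_{23}\cup E_{31}$ in $\R^3$ as three half-disks meeting along a common arc $e$ containing the bridge points $\mathbf{p}$, with each planar diagram $\mP_{ij}$ drawn on $E_{ij}$. For each $i$, the trivial disk system $\mD_i$ bounds the trivial link on the sphere $E_{ij}\cup(-E_{ki})$ whose diagram is $\mP_{ij}\cup\overline{\mP_{ki}}$; since $\mD_i$ is trivial, it can be pushed slightly into $X_i$ and projected to a broken surface diagram $\sD_i$ in $\R^3$ whose double point arcs project exactly to the crossings of $\mP_{ij}$ and of $\mP_{ki}$, and in which no additional singular structure is introduced at the bridge points. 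Arranging the three pieces to meet only along the tangles and bridge points, I take $\sD(\mP):=\sD_1\cup\sD_2\cup\sD_3$, which is a broken surface diagram of $\mK$ in $\R^3$.

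Next, I identify the broken sheets of $\sD(\mP)$ and define $\varphi$. Along every arc $\beta\subset\mP_{ij}$, the portions of $\sD_i$ and $\sD_j$ containing $\beta$ are joined without any new double-point arc, and at every bridge point $p\in\mathbf{p}$ the sheets coming in from all three $\sD_i$ pass smoothly through $p$ because $\mK$ itself is smooth there. It follows that two arcs lie on the same broken sheet of $\sD(\mP)$ if and only if they are connected by a chain of arcs sharing bridge points in succession, so the broken sheets of $\sD(\mP)$ are in canonical bijection with the stated equivalence classes. Definition~\ref{def: kei col} is precisely the assertion that an $X$-coloring $C\in\Col_X(\mP)$ is constant on each such class, so $\varphi(C)$, defined by assigning to each broken sheet the common color of its arcs, is well-defined. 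The relation $a*b=c$ at a crossing of $\mP_{ij}$ becomes, verbatim, the kei relation at the corresponding double point arc of $\sD(\mP)$, so $\varphi(C)\in\Col_X(\sD(\mP))$; restriction to the tangle arcs yields the inverse map.

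The principal technical obstacle is the geometric step: verifying that each trivial disk system $\mD_i$ admits a broken surface diagram whose double point arcs correspond precisely to the crossings of $\mP_{ij}\cup\overline{\mP_{ki}}$, and that the three pieces can be assembled without introducing additional singularities. I expect to handle this by realizing each disk of $\mD_i$ as a 4-dimensional perturbation of the planar region it bounds in $E_{ij}\cup(-E_{ki})$, pushed slightly up or down at each crossing in agreement with the over/under data of $\mP_{ij}$ or $\mP_{ki}$. Once $\sD(\mP)$ is produced with the described structure, the bijection $\varphi$ is immediate, and the equality $\#\Col_X(\mP)=\#\Col_X(\mK)$ for finite $X$ follows from the Roseman-move invariance of the coloring count noted earlier.
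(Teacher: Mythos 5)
Your overall strategy (build $\sD(\mP)$ so that it meets each page $E_{ij}$ in $\mP_{ij}$, then extend colors outward for $\varphi$ and restrict for $\psi$) is the same as the paper's, but the step you defer as ``the principal technical obstacle'' is in fact the crux, and the structure you assert for $\sD(\mP)$ cannot be achieved in general. The components of $D_i=\mP_{ij}\cup\overline{\mP_{ki}}$ are immersed curves with crossings, so they bound no planar regions, and a generic projection of the trivial disk system $\mD_i$ cannot have its double point set ``exactly over the crossings'': the double point arcs emanating from the crossings of $D_i$ are compact arcs that must terminate in the interior, either at branch points or by pairing up with other crossings (possibly passing through triple points). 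This interior resolution is exactly what destroys your key claim that broken sheets of $\sD(\mP)$ correspond to bridge-point-equivalence classes of arcs. For instance, if a double point arc ends at a branch point, the over-sheet and both under-sheets at the corresponding crossing are joined into a single broken sheet by going around the branch point, even though the two under-arcs share no bridge point; if a double point arc joins two crossings, the under-arcs at one crossing are forced onto the same sheets as under-arcs at the other; and the sheet on the ``inner'' side of such an arc (e.g., the middle under-arc of an R2 configuration), as well as any cap disks, may contain no tangle arc at all, or conversely a sheet may contain arcs carrying different colors under a given $C\in\Col_X(\mP)$ only by virtue of the crossing relations, not by your equivalence. Consequently both the well-definedness of your $\varphi$ and the claim that restriction inverts it are unsupported.

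Two further points are needed and missing. First, even if you produce embedded disks over $D_i$ with controlled singularities, you must show they form the trivial disk system with boundary $\partial\mD_i$, i.e.\ that your surface really is $\mK$; the paper does this by building $\sD_i$ as the trace of a sequence of Reidemeister moves taking $D_i$ to a crossingless diagram (so the disks are visibly isotopic into the boundary) and then invoking the uniqueness of trivial disk systems with prescribed boundary. Second, the bijection must be proved by propagating a coloring of $D_i$ level by level through that movie, checking on the local models of the R1/R2/R3 traces (branch points, double-point-arc maxima, triple points) that the extension exists and is unique; this replaces your sheet-counting argument and is where the kei axioms actually enter. Your restriction map $\psi$ is fine, but its bijectivity comes from this propagation, not from every sheet meeting the tri-plane.
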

\begin{proof}
Suppose that $\mP$ is a tri-plane diagram of a $(b;c_1,c_2,c_3)$-bridge trisection $\mT$.
Take a point $p_{\infty}$ in $e = E_{12} \cap E_{23} \cap E_{31}$ with $p_{\infty} \not\in \mK$.
By taking an orientation preserving diffeomorphism between $S^4 \setminus \{p_{\infty}\}$
and $\R^4$ suitably, we may assume that 
\[
X_j \setminus \{p_{\infty}\}  = 
\left\{(r \cos \theta, r \sin \theta, z, w) \  \middle|\  r \geq 0, 
\frac{2\pi j}{3} \leq \theta \leq 
\frac{2\pi(j+1)}{3} \right\}
\]
for $j = 1,2,3$, and
\[
E_{ij} \setminus \{p_{\infty}\}
= \R^{2+}_k
:= \left\{ \left(r \cos \frac{2\pi j}{3}, r \sin \frac{2\pi j}{3}, z, 0 \right)  \ \middle|\  r \geq 0 \right\}
\]
for any cyclic permutation, denoted by $(i,j,k)$, of $(1,2,3)$ without loss of generality.
Then it also follows that
\begin{itemize}
\item $B_{ij} \setminus \{p_{\infty}\}
= \{ (r \cos (2\pi j /3), r \sin (2\pi j/3), z, w) \mid r \geq 0\}$,
\item $\Sigma \setminus \{p_{\infty}\}$ is the $zw$-plane, and
\item $e \setminus \{p_{\infty}\}$ is the $z$-axis.
\end{itemize}

In particular, $D_i := \mP_{ij} \cup \overline{\mP_{ki}}$ can be regarded as a classical link diagram of the trivial $c_i$-component classical link $\partial \mD_i$, 
where the diagram lies in the plane $\R^{2+}_k \cup (-\R^{2+}_j)$ 
and the orientation of $\R^{2+}_k$ is induced from $E_{ij}$. 
To obtain a broken surface diagram of $\mK$,
it suffices to construct a broken surface diagram $\sD_i$ of the trivial $c_i$-disk system 
$\mD_i = \mK \cap X_i$ in the half space 
\begin{eqnarray*}
(X_i \setminus \{p_{\infty}\}) \cap \R^3 &=&
\left\{(r \cos \theta, r \sin \theta, z, 0) \  \middle|\  r \geq 0, 
\frac{2\pi j}{3} \leq \theta \leq 
\frac{2\pi(j+1)}{3} \right\}\\
& \cong& \R^2 \times [0, \infty)
\end{eqnarray*}
such that $\sD_i \cap (\R^{2+}_k \cup (-\R^{2+}_j)) = D_i$.
Then we will conclude that $\sD(\mP) := \sD_1 \cup \sD_2 \cup \sD_3$ is a
broken surface diagram of $\mK = \mD_1 \cup \mD_2 \cup \mD_3$ such that
$\sD(\mP) \cap \R^{2+}_k = \mP_{ij}$.

To obtain such $\sD_i$, we first take a sequence of Reidemeister moves 
and ambient isotopies of the plane
\[
D_i = D_{i,0} \overset{m_1}{\longrightarrow} D_{i,1} \overset{m_2}{\longrightarrow} \cdots 
\overset{m_n}{\longrightarrow} D_{i, n} = D'_i
\]
such that all components of $D'_i$ simultaneously bound disks in the plane.
Then, after fixing the identification $(X_i \setminus \{p_{\infty}\})\cap \R^3 \cong \R^2 \times [0, \infty)$, we associate to each move $m_k$ a broken surface diagram $\sD_{i,k}$
in $\R^2 \times [k-1, k]$ as follows:
If $m_k$ is an ambient isotopy $F_k \colon \R^2 \times [0,1] \to \R^2$, then we define 
$\sD_{i,k}$ as the image of the map
\[
D_{i,k-1} \times [0,1] \to \R^2 \times [k-1,k], \ (x,t) \mapsto (F_k(x,t), t + k-1).
\]
If $m_k$ is a Reidemeister move, then we take a small disk $\delta$ in $\R^2$ so that $D_{i,k-1} \setminus \delta$ coincides with $D_{i,k} \setminus \delta$, and define 
$\sD_{i,k}$ as the union of $(D_{i,k} \setminus \delta) \times [k-1,k]$ with 
one of the surfaces shown in Figure~\ref{Fig: R moves}.
\begin{figure}[htbp]
\begin{center}
\includegraphics[scale =1]{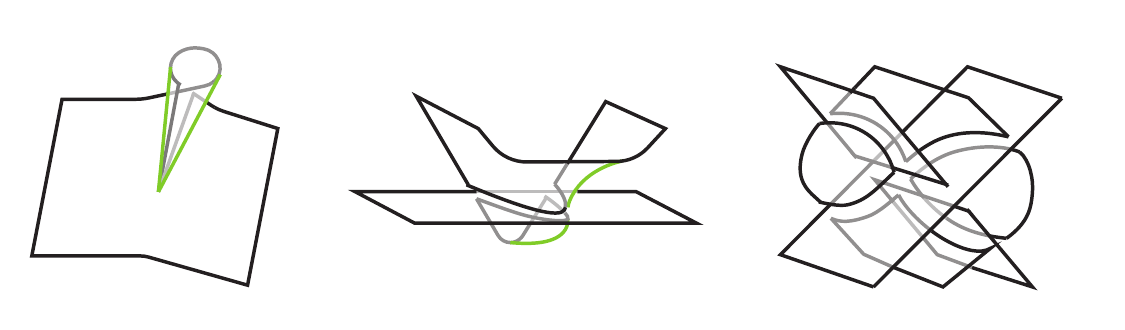}
\caption{Surfaces associated to Reidemeister moves. \label{Fig: R moves}}
\end{center}
\end{figure}
For both cases,
we have $\sD_{i,k} \cap (\R^2\times \{k-1\}) = D_{i,k-1} \times \{k-1\}$
and $\sD_{i,k} \cap (\R^2\times \{k\}) = D_{i,k} \times \{k\}$.

Finally, we define $\sD_{i,n+1}$ as mutually distinct disks smoothly embedded in $\R^2 \times [n,n+1]$ with boundary $D_{i,n} \times \{n\} =D'_i \times \{n\}$.
Now we have a broken surface diagram
\[
\sD_i := \sD_{i,0} \cup \cdots \cup \sD_{i,n+1}
\]
with boundary $D_i$. From the construction of $\sD_i$, it is obvious that an embedded disks $\mD'_i$ in $X_i \setminus \{p_{\infty}\}$ whose projection image gives $\sD_i$ are simultaneously isotopic into 
$\partial X_i \setminus \{p_{\infty}\}$, and hence $\mD'_i$ is a trivial $c_i$-disk system with boundary
$\partial \mD_i$. As shown in \cite{Kam-book}, 
any two trivial disk systems with the same boundary are isotopic, 
and hence $\mD'_i$
is isotopic to $\mD_i$. In particular, $\sD_i$ is regarded as a broken surface diagram of $\mD_i$.
Now, we have a broken surface diagram $\sD(\mP) = \sD_1 \cup \sD_2 \cup \sD_3$
of $\mK = \mD_1 \cup \mD_2 \cup \mD_3$. 

Next, we give bijections between $\Col_X(\mP)$ and $\Col_X(\sD(\mP))$.
We first define a map
\[
\varphi \colon \Col_X(\mP) \to \Col_X(\sD(\mP))\]
as follows. Let $C=(C_{12},C_{23}, C_{31}) \in \Col_X(\mP)$.
Note that $C_{ki}$ naturally induces a coloring $\overline{C_{ki}}$ of $\overline{\mP_{ki}}$,
and $C_{ij} \cup \overline{C_{ki}}$ defines an $X$-coloring  of the diagram $D_i=D_{i,0}$.
Moreover, each move $m_k$ and $X$-coloring of $D_{i,k-1}$ defines an $X$-coloring of
$D_{i,k}$. Hence we have $X$-colorings of $D_{i,k} \times \{k\}$. 
Here we also note that any broken sheet of $\sD_i$ has non-empty intersection 
to $\R^2 \times \{k\}$ for some $0 \leq k \leq n$.
Now, we define $\varphi(C)$ so that the coloring of each broken sheet is the same as that of 
its intersection to $\bigsqcup_{0 \leq k \leq n} \R^2 \times \{k\}$. 
As shown in Figure~\ref{Fig: R moves2}, we can see that this assignment is well-defined as an $X$-coloring of $\sD(\mP)$.
\begin{figure}[htbp]
\begin{center}
\includegraphics[scale =0.85]{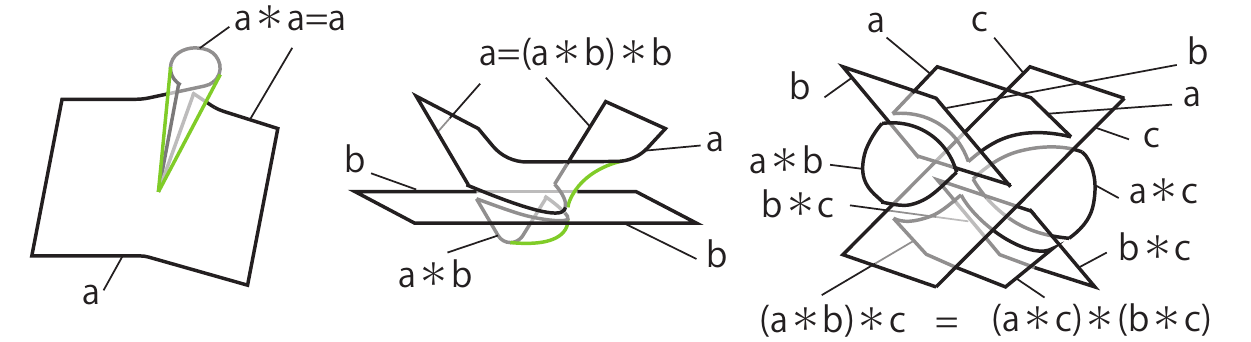}
\caption{The coloring of each broken sheet is uniquely induced from its boundary. \label{Fig: R moves2}}
\end{center}
\end{figure}

We next define a map
\[
\psi \colon \Col_X(\sD(\mP)) \to \Col_X(\mP)
\]
as follows. Let $C \in \Col_X(\sD(\mP))$. Regard $\mP_{ij}$ as embedded in $\sD_i$, and then each arc of $\mP_{ij}$ is lying in a broken sheet of $\sD_i$.
We define $\psi(C)_{ij}$ so that the coloring of each arc is the same as that of the broken sheet containing the arc. Then, we can see that for each end point $p \in \partial \mP_{ij}$,
the arcs in the $\mP_{ij}$ containing $p$ are lying in the same broken sheet of $\sD$,
and hence colored by the same element. Moreover, for each crossing $c$ in $\mP_{ij}$, the arcs near $c$ are lying in the broken sheets near a double point curve, and under-over information is preserved. These imply that $\psi(C) := (\psi(C)_{12}, \psi(C)_{23},\psi(C)_{31})$ is 
a well-defined $X$-coloring of $\mP$.
Now, it is easy to see that both $\psi \circ \varphi$ and $\varphi \circ \psi$ are
the identities, and hence these are bijections.
\end{proof}

By the same arguments as the proof of Proposition~\ref{prop: diagram},
we can also prove the following proposition.
\begin{prop}
%\label{prop: diagram}
For any oriented tri-plane diagram $\mP$ of an oriented surface link $\mK$,
there exists an oriented broken surface diagram $\sD(\mP)$ 
of $\mK$ in $\R^4$ such that for any quandle $Q$,  we have a bijection
\[
\varphi \colon \Col_Q(\mP) \to \Col_Q(\sD(\mP)).
\]
In particular, if $Q$ is finite, then  we have
\[
\# \Col_Q(\mP) = \# \Col_Q(\mK).
\] 
\end{prop}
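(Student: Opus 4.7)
The plan is to replay the proof of Proposition~\ref{prop: diagram} step by step, now carrying orientations throughout. First I would take the same broken surface diagram $\sD(\mP) = \sD_1 \cup \sD_2 \cup \sD_3$ constructed there. Because the tri-plane diagram is oriented, each $\mP_{ij}$ carries the orientation induced from $\partial \mD_i$, and with the standard mirror convention the classical diagrams $D_i = \mP_{ij} \cup \overline{\mP_{ki}}$ inherit orientations whose boundaries match along $\partial \mP_{ij}$. The product slabs $D_{i,k-1}\times[0,1]$, the Reidemeister-move surfaces of Figure~\ref{Fig: R moves}, and the final capping disks then all acquire canonical orientations, making $\sD(\mP)$ into an oriented broken surface diagram of the oriented surface link $\mK$.

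Next I would define the maps $\varphi$ and $\psi$ exactly as in the kei proof. Given $C = (C_{12}, C_{23}, C_{31}) \in \Col_Q(\mP)$, the combined coloring $C_{ij} \cup \overline{C_{ki}}$ is a $Q$-coloring of the oriented diagram $D_i = D_{i,0}$; propagating it across each move $m_k$ yields compatible $Q$-colorings of all $D_{i,k}$; assigning to each broken sheet the color of any arc it meets in some $\R^2 \times \{k\}$ produces $\varphi(C) \in \Col_Q(\sD(\mP))$. The inverse $\psi$ is defined by reading off the colors of the arcs of each $\mP_{ij}$ from the broken sheets of $\sD_i$ that contain them.

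The only place where the oriented case genuinely differs from the kei case is the verification of well-definedness. For keis, the involutive axiom $(a*b)*b = a$ made the crossing relation $a*b = c$ symmetric in the under-colors $a$ and $c$, so it did not matter how orientations of under-arcs were specified; for quandles the relation is truly asymmetric, so at every double point curve of $\sD_i$ one must check that the orientations of the over-sheet and under-sheets inherited from the oriented diagram are the ones prescribed by the quandle convention. I expect this to be the main obstacle, but it is a routine local case-check: for each surface in Figure~\ref{Fig: R moves} and each local configuration in Figure~\ref{Fig: R moves2}, one compares the orientations of the over- and under-arcs of the associated Reidemeister move with the orientation of the newly created broken sheet, and observes that the $Q$-coloring propagation rule across oriented Reidemeister moves is exactly what is needed. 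Once this compatibility is settled, the arguments that $\varphi$ and $\psi$ are well-defined and mutually inverse go through verbatim, and the conclusion for finite quandles is immediate.
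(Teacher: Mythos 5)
Your proposal is correct and follows essentially the same route as the paper, which in fact gives no separate argument here but simply states that the proposition follows ``by the same arguments as the proof of Proposition~\ref{prop: diagram}.'' Your additional observation---that the only genuinely new point is checking the orientation conventions at double point curves, since the quandle relation is not symmetric in the under-colors---is exactly the routine verification implicitly left to the reader.
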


\subsection{Proof of Theorem~\ref{thm: lower bound}}

To prove Theorem~\ref{thm: lower bound}, we use the following lemma.
\begin{lem}
\label{lem: lower bound}
If a surface link $\mK$ admits a $(b; c_1,c_2,c_3)$-trisection $\mT$,
then for any finite kei $X$, we have
\[
\min\{c_1,c_2,c_3\} \geq \log_{\# X}(\#\Col_X(\mK)).
\]
\end{lem}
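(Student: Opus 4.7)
The plan is to use Proposition~\ref{prop: diagram} to replace $\Col_X(\mK)$ with the colorings of a tri-plane diagram $\mP=(\mP_{12},\mP_{23},\mP_{31})$ of $\mT$, and then, for each cyclic permutation $(i,j,k)$ of $(1,2,3)$, to construct an injection
\[
\Phi_i \colon \Col_X(\mP) \hookrightarrow \Col_X(D_i),
\]
where $D_i := \mP_{ij}\cup\overline{\mP_{ki}}$ is the classical link diagram lying in $S^2=E_{ij}\cup(-E_{ki})$. Since $D_i$ is a diagram of the trivial $c_i$-component classical link $\partial \mD_i$, Reidemeister invariance of kei colorings gives $\#\Col_X(D_i)=(\#X)^{c_i}$. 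Combining with Proposition~\ref{prop: diagram} yields $\#\Col_X(\mK)\leq (\#X)^{c_i}$ for each $i$, and taking $\log_{\#X}$ and minimizing over $i$ completes the proof.

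The natural candidate for $\Phi_i$ is $(C_{12},C_{23},C_{31})\mapsto C_{ij}\cup\overline{C_{ki}}$. Well-definedness is immediate: the boundary matching in Definition~\ref{def: kei col} ensures that $C_{ij}$ and $C_{ki}$ agree on $\partial \mP_{ij}=\partial \mP_{ki}$, and each kei crossing relation $a*b=c$ persists under a planar mirror (the relevant local data is the pair of under-arcs together with the over-arc; axiom~(ii) guarantees symmetry of the relation under swapping the two under-arcs, so the reflected crossing carries the same coloring constraint).

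To prove injectivity of $\Phi_i$, I want to recover $C_{jk}$ from $(C_{ij},C_{ki})$. By Definition~\ref{def: kei col} the boundary colors of $\mP_{jk}$ are already determined by those of $C_{ij}$ and $C_{ki}$ along $\partial \mP_{jk}$. Since $\alpha_{jk}$ is a trivial $b$-strand tangle, Reidemeister invariance allows me to identify $\Col_X(\mP_{jk})$ with the colorings of a crossingless diagram of $\alpha_{jk}$, and in that presentation a coloring is simply a choice of one element of $X$ per strand, which is determined by the color of either of the two boundary endpoints of that strand. Hence at most one $C_{jk}$ is compatible with the given boundary colors, so $\Phi_i$ is injective.

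The only mildly nontrivial step is this last injectivity argument for colorings of a trivial tangle diagram; once one accepts the standard fact that a kei coloring of any diagram of a trivial $b$-strand tangle is determined by its boundary values (via Reidemeister invariance applied to reduce to the trivial diagram), the bound falls out immediately from Proposition~\ref{prop: diagram} and the count $\#\Col_X(D_i)=(\#X)^{c_i}$. I do not anticipate a serious obstacle beyond this routine verification.
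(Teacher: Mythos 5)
Your proof is correct and follows essentially the same route as the paper: both inject $\Col_X(\mP)$ into the colorings of the trivial $c_i$-component link diagram $\mP_{ij}\cup\overline{\mP_{ki}}$, use $\#\Col_X(L_{c_i})=(\#X)^{c_i}$, and establish injectivity from the fact that a kei coloring of a trivial tangle diagram is determined by its boundary colors, so that $C_{jk}$ is recovered from $(C_{ij},C_{ki})$. The only difference is that you spell out the routine well-definedness of coloring the mirrored half, which the paper leaves implicit.
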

\begin{proof}
We first note that for the $c$-component trivial classical link $L_c$ and the finite kei $X$,
we have
\[
\# \Col_{X}(L_c) = (\# X)^c.
\]
Therefore, it suffices to prove that the inequality
\[
\# \Col_X(L_{c_i}) \geq \# \Col_X(\mK)
\]
holds for any $i \in \{1,2,3\}$. Let $\mP =(\mP_{12}, \mP_{23}, \mP_{31})$ be a tri-plane diagram of $\mT$.
Then, we can define a map $\iota \colon \Col_X(\mP) \to \Col_X(L_{c_i})$
by using the colorings $\{C_{ij}, C_{ki}\}$ of the tangle diagrams $\{\mP_{ij}, \mP_{ki}\}$ to 
color the classical link diagram $\mP_{ij} \cup \overline{\mP_{ki}}$.
Here we note that any coloring of a tangle diagram of a trivial tangle
is characterized by the colorings of the end points. In particular, 
for any $C=(C_{12}, C_{23}, C_{31}) \in \Col_X(\mP)$, 
the coloring $C_{jk}$ of $\mP_{jk}$ is uniquely determined by $\{C_{ij}, C_{ki}\}$.
This implies that $\iota$ is injective, and hence 
the desired inequality holds.
\end{proof}

\def\proofname{Proof of Theorem~\ref{thm: lower bound}}
\begin{proof}
Suppose that $\mK$ admits a $(b; c_1, c_2, c_3)$-trisection.
Then $\mK$ has a cell decomposition consisting of $2b$ vertices, $3b$ edges and $(c_1 +c_2 + c_3)$ faces. In particular, we have
\[
\chi(\mK) = 2b -3b + (c_1 + c_2 + c_3) \geq  3 \min \{c_1, c_2, c_3\} -b. 
\]
Now, the desired inequality immediately follows from Lemma~\ref{lem: lower bound}.
\end{proof}
\def\proofname{Proof}

%%%%%
\section{Kei colorings of twist spun knots}
\label{sec: twist spun}

In this section, we consider the equality of Theorem~\ref{thm: lower bound} for 
the cases of twist spun knots and their stabilizations.

A {\it 1-tangle diagram} $T_K$ of a classical knot $K \subset S^3$
is a diagram of a tangle $K \setminus \mathring{B} \subset S^3 \setminus \mathring{B}$, where $B \subset S^3$ is a 3-ball such that $K \cap B$ is a trivial 1-tangle.
In \cite{AS05}, Asami and Satoh gave a broken surface diagram $\sD_m(K)$ of the $m$-twist-spinning $\mS_m(K)$ of $K$
such that there is an embedding $\iota_m \colon T_K \to \sD_{m}(K)$.
Then, for any kei $X$, the induced map 
$\iota^*_m \colon \Col_{X}(\sD_{m}(K)) \to \Col_{X}(T_K)$ is defined 
so that the coloring of each arc of $T_K$ is the same as that of the broken sheet containing the arc. Now, the next lemma directly follows from \cite[Lemma~5.1]{AS05}.
Here we choose an end point of $T_K$ and call the arc containing the end point {\it the terminal arc} of $T_K$. (Note that since we consider kei colorings and $T_K$ is unoriented, we can choose any end point of $T_K$ for defining the terminal arc.)
\begin{lem}[\text{\cite[Lemma~5.1]{AS05}}]
\label{lem: AS}
The map $\iota^*_m$ is injective. Moreover, $C \in \Col_{X}(T_K)$ belongs to 
$\iota^*_m(\Col_{X}(\sD_{m}(K)))$
if and only if $C * a^m_- = C$ holds, where $a_-$ is the color of the terminal arc of $T_K$.
\end{lem}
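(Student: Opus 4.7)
My plan is to work directly with the Asami-Satoh construction of $\sD_m(K)$, in which $\sD_m(K)$ is realized by spinning the $1$-tangle $T_K$ about the axis through its terminal arc, with $m$ full rotations (twists) interspersed during the spin; the embedding $\iota_m$ then identifies $T_K$ with the time-zero slice of this spin. With this picture in hand, the proof naturally decomposes into an injectivity half and an image-characterization half.

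For injectivity of $\iota^*_m$, I would observe that every broken sheet of $\sD_m(K)$ meets $\iota_m(T_K)$: the sheets in the spun region are swept out by arcs of $T_K$, while each capping disk that closes off the surface at an endpoint of $T_K$ has its color forced by its boundary via the kei relations. Hence any $\tilde C \in \Col_X(\sD_m(K))$ is determined by its restriction $\iota^*_m(\tilde C)$, giving injectivity and reducing the problem to describing the image of $\iota^*_m$ inside $\Col_X(T_K)$.

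For the image characterization, I would propagate a given $C \in \Col_X(T_K)$ once around the $S^1$-direction of the spin. In the untwisted portion, the color of each sheet is transported unchanged. Inside each twist, however, every non-axis sheet $S_\alpha$ passes under the axis sheet (whose color is $a_-$), and the kei relation at the resulting double-point curve replaces $C(\alpha)$ by $C(\alpha)*a_-$. After the full sequence of $m$ twists, the propagated color of $S_\alpha$ becomes $C(\alpha) * a^m_-$, so closing the $S^1$-direction back to the time-zero slice forces $C(\alpha) * a^m_- = C(\alpha)$ for every arc $\alpha$, which is exactly the asserted condition $C * a^m_- = C$. Conversely, if $C$ satisfies this equation, the same propagation defines a globally consistent assignment $\tilde C \in \Col_X(\sD_m(K))$ with $\iota^*_m(\tilde C) = C$; the capping disks pose no further obstruction, since their boundaries trace arcs carrying the constant color $a_-$ and therefore extend uniquely.

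The main obstacle will be verifying that a single twist contributes exactly one application of $*a_-$ and that no hidden constraint is introduced by the capping region. This reduces to reading off the double-point structure of one fundamental domain of the twist region in the Asami-Satoh picture and applying the kei axioms $(a*b)*b = a$ and $a*a=a$; orientation plays no essential role here because the kei operation is involutive, so the final monodromy depends only on the integer $m$. Once this count is confirmed, the conclusion of the lemma follows directly from the two paragraphs above.
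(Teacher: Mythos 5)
The paper offers no proof of this statement at all: it is imported verbatim as \cite[Lemma~5.1]{AS05}, with the surrounding text only setting up the map $\iota^*_m$ and remarking that the lemma ``directly follows'' from Asami--Satoh. So what you have written is not an alternative to the paper's argument but a reconstruction of the argument behind the citation, and as a reconstruction it is sound in outline: the colorings of $\sD_m(K)$ are exactly the colorings of the ``time-zero'' slice $T_K$ that are fixed by the monodromy of one full trip around the spin circle, and that monodromy is $(*a_-)^m$ because each of the $m$ twists pushes every sheet once under the sheet carrying the terminal color $a_-$. This is precisely the mechanism that makes Proposition~3.3 of the paper work (for a kei, $*a_-$ is an involution, so the condition is vacuous for even $m$ and forces near-triviality for odd $m$).

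Two caveats. First, your injectivity argument leans on the claim that every broken sheet of $\sD_m(K)$ meets $\iota_m(T_K)$; that is not literally true, since the under-sheets broken along the double point curves of the twist region produce sheets confined to an interval of spin-times not containing time zero. The correct (and standard) fix, which you half-state, is that colors propagate uniquely across each double point curve (because $*b$ is a bijection), so a coloring of $\sD_m(K)$ is determined by its restriction to any slice; injectivity follows from that, not from incidence. Second, the step you flag as ``the main obstacle''---that one full twist contributes exactly one application of $*a_-$ and that the capping region imposes no further relation---is the entire quantitative content of the lemma and is exactly the computation carried out in \cite{AS05}; until you read off the double point structure of one fundamental domain of the twist region in their diagram, your text is a correct plan rather than a complete proof. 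Given that the paper itself delegates this to \cite{AS05}, your sketch is a reasonable account of what is being delegated.
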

As a corollary of Lemma~\ref{lem: AS}, we have the following proposition.
\begin{prop}
\label{prop: AS}
We have the equality
\[
\iota^*_m(\Col_{X}(\sD_{m}(K)))
= 
\begin{cases}
\Col_{X}(T_K) & (\text{$m$ is even})\\
\{\text{trivial $X$-colorings of $T_K$}\} & (\text{$m$ is odd})
\end{cases}.
\]
\end{prop}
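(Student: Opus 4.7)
The plan is to apply Lemma~\ref{lem: AS}, which identifies $\iota_m^*(\Col_X(\sD_m(K)))$ with the set of $C \in \Col_X(T_K)$ satisfying $C * a_-^m = C$, and then to analyze this condition separately for $m$ even and $m$ odd. The central algebraic fact is kei axiom (ii): the map $\rho_{a_-} \colon X \to X$ defined by $x \mapsto x * a_-$ is an involution, so $\rho_{a_-}^m = \mathrm{id}$ when $m$ is even and $\rho_{a_-}^m = \rho_{a_-}$ when $m$ is odd.

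For $m$ even the condition is automatic, so Lemma~\ref{lem: AS} immediately yields $\iota_m^*(\Col_X(\sD_m(K))) = \Col_X(T_K)$. For $m$ odd it becomes $x * a_- = x$ for every arc color $x$ of $C$. A trivial (monochromatic) coloring of constant value $b$ satisfies this by axiom (i), since then $a_- = b$ and $b * b = b$. This gives the easy inclusion $\{\text{trivial colorings}\} \subseteq \iota_m^*(\Col_X(\sD_m(K)))$.

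The main obstacle is the reverse inclusion for $m$ odd: showing that the condition $x * a_- = x$ on every arc color $x$ forces $C$ to be monochromatic. My strategy is to exploit the connectedness of the fundamental kei of $T_K$. Let $Y \subseteq X$ be the subkei generated by the arc colors of $C$. Because $K$ is a knot, any two arcs of $T_K$ are related by a sequence of Wirtinger moves, so all arc colors lie in a single $\mathrm{Inn}(Y)$-orbit. The hypothesis, combined with axiom (iii) in the form $(u * v) * a_- = (u * a_-) * (v * a_-)$ (which propagates fixing by $\rho_{a_-}$ from generators to all products), shows that $\rho_{a_-}|_Y = \mathrm{id}$. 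If $y \in Y$ is any arc color, write $y = w(a_-)$ with $w \in \mathrm{Inn}(Y)$; the conjugation identity $\rho_{u * v} = \rho_v \rho_u \rho_v$ (valid in any kei by axiom (iii)) then gives $\rho_y|_Y = w \rho_{a_-} w^{-1}|_Y = \mathrm{id}$, and an induction along the same conjugation identity extends this from arc-color generators to every element of $Y$. Thus $Y$ is a trivial subkei. But $Y$ is also a single $\mathrm{Inn}(Y)$-orbit, and in a trivial kei $\mathrm{Inn}$ is itself trivial, so the orbit consists of a single element. Hence $Y = \{a_-\}$ and $C$ is the trivial coloring with value $a_-$, completing the argument.
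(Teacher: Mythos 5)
Your proof is correct, and for the odd case it takes a genuinely different route from the paper's. Both arguments treat even $m$ identically: $*a_-$ is an involution, so the condition of Lemma~\ref{lem: AS} is vacuous. For odd $m$ the paper also uses the involution property, but only to reduce to $m=1$, and then imports a geometric fact: by Zeeman's theorem the $1$-twist spin $\mS_1(K)$ is unknotted, so $\sD_1(K)$ admits only trivial colorings and the image of $\iota^*_1$ is exactly the set of trivial colorings of $T_K$. You instead prove the purely combinatorial statement that any coloring of the connected tangle diagram fixed by $*a_-$ must be monochromatic: distributivity propagates the fixed-point property from the arc colors to the subkei $Y$ they generate, the conjugation identity $\rho_{u*v}=\rho_v\rho_u\rho_v$ transports $\rho_{a_-}|_Y=\mathrm{id}$ to $\rho_y|_Y=\mathrm{id}$ for every arc color $y=w(a_-)$, and connectivity of the strand puts all arc colors in a single orbit of the group these $\rho_y$ generate, which is now trivial. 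This is elementary and self-contained --- it uses no $4$-dimensional topology and in effect gives an algebraic re-derivation of the fact that $1$-twist spins admit only trivial kei colorings --- at the cost of more bookkeeping; the paper's route is shorter but leans on Zeeman's nontrivial theorem. One small streamlining: once $\rho_y|_Y=\mathrm{id}$ holds for every arc color $y$, the group they generate already acts trivially on the arc colors, so the single-orbit observation finishes the argument; the final induction extending triviality of $\rho$ to all of $Y$ is not needed.
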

\begin{proof}
For any $a \in X$, the map $*a^2 \colon X \to X$ is the identity,
and hence we have the desired equality for the cases where $m$ is even. 
Next, for any odd $m$, we see that
\begin{eqnarray*}
\iota^*_m(\Col_{X}(\sD_{m}(K)))
&=&
\{C \in \Col_{X}(T_K) \mid C * a_-^m = C\}\\
&=&
\{C \in \Col_{X}(T_K) \mid C * a_- = C\}\\
&=& \iota^*_1(\Col_{X}(\sD_{1}(K))).
\end{eqnarray*}
Here we recall that $\sD_1(K)$ is a broken surface diagram of a $1$-twist spun $\mS_1(K)$,
which is trivial for any $K$; see \cite{Zee65}. In particular, $\sD_1(K)$ admits only trivial $X$-colorings, and this fact implies the desired equality for any odd $m$. 
\end{proof}
Now we prove the following theorem, which can be regarded as a generalization of Theorem~\ref{thm: twist spun}.
Here we recall that $P$ is either one of the two trivially embedded $\R P^2$'s in $S^4$ (with normal Euler number $\pm 2$) and $T$ a trivially embedded torus in $S^4$.
\begin{theorem}
\label{thm: twist spun general}
For a 1-tangle diagram $T_K$ of a classical knot $K$, suppose that there exists a finite kei $X$ with  
$b(K) = \log_{\# X}(\# \Col_{X}(T_K))$.
Then, for any $m \in \Z$, we have the following equalities:
\begin{itemize}
\item 
$b(\mS_{2m}(K) ) = 3 b(K) - 2$,
\item
$b(\mS_{2m}(K) \# P) = 3 b(K) -1$, and
\item
$b(\mS_{2m}(K) \# T) = 3 b(K)$.
\end{itemize}
\end{theorem}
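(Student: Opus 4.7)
The plan is to prove each of the three equalities by matching lower and upper bounds. The lower bounds will come from Theorem~\ref{thm: lower bound} applied to the given kei $X$, once I compute $\# \Col_X$ for each surface link, while the upper bounds will come from bridge-trisection constructions in \cite{MZ17}.

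For the coloring count of $\mS_{2m}(K)$, I would use that $2m$ is even, so by Proposition~\ref{prop: AS} the injection $\iota^*_{2m}$ is in fact a bijection from $\Col_X(\sD_{2m}(K))$ onto $\Col_X(T_K)$. Combined with the Roseman-move invariance built into Proposition~\ref{prop: diagram}, this yields
\[
\# \Col_X(\mS_{2m}(K)) \;=\; \# \Col_X(T_K) \;=\; (\# X)^{b(K)}
\]
by the hypothesis on $T_K$. For the two stabilizations I would invoke the standard multiplicativity formula
\[
\# \Col_X(\mK_1 \# \mK_2) \;=\; \frac{\# \Col_X(\mK_1) \cdot \# \Col_X(\mK_2)}{\# X},
\]
which I would verify directly from broken surface diagrams by noting that the connecting 2-sphere is realized by a single sheet and therefore carries one common color. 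Since $P$ and $T$ are trivial surface knots with exactly $\# X$ colorings, both $\# \Col_X(\mS_{2m}(K) \# P)$ and $\# \Col_X(\mS_{2m}(K) \# T)$ also equal $(\# X)^{b(K)}$.

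Plugging these counts, together with the Euler characteristics $2, 1, 0$ of the three surface links, into Theorem~\ref{thm: lower bound} immediately gives the lower bounds $3b(K) - 2$, $3b(K) - 1$, and $3b(K)$. For the matching upper bounds I would first cite the Meier--Zupan construction that builds a $(3b(K)-2)$-bridge trisection of the twist-spin $\mS_{2m}(K)$ from any $b(K)$-bridge decomposition of $K$, giving $b(\mS_{2m}(K)) \leq 3b(K) - 2$ (this upper bound is implicit in \cite{MZ17}, as it underlies their Question~5.2). I would then apply the connect-sum inequality $b(\mK_1 \# \mK_2) \leq b(\mK_1) + b(\mK_2) - 1$ for bridge trisections together with $b(P) = 2$ and $b(T) = 3$ to handle the two stabilizations.

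The main obstacle will be the coloring-count step for $\mS_{2m}(K)$: it has to be assembled from the even case of Proposition~\ref{prop: AS}, the tri-plane/broken-surface identification of Proposition~\ref{prop: diagram}, and the hypothesis on $T_K$, and one must be careful that the bijection $\iota^*_{2m}$ does not lose any colorings. A secondary technicality is rigorously justifying the multiplicativity of $\# \Col_X$ under connect-sum in the broken-surface-diagram framework used in Section~2, and confirming the precise form of the upper-bound trisection construction quoted from \cite{MZ17}.
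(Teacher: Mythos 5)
Your proposal is correct and follows essentially the same route as the paper: lower bounds from Theorem~\ref{thm: lower bound} after computing $\# \Col_X(\mS_{2m}(K)) = \# \Col_X(T_K) = (\# X)^{b(K)}$ via Proposition~\ref{prop: AS}, and upper bounds from the Meier--Zupan $(3b(K)-2)$-trisection together with $b(P)=2$, $b(T)=3$ and the connected-sum inequality. The only cosmetic difference is that you invoke a general connected-sum multiplicativity formula for $\# \Col_X$, which requires care in full generality, whereas the paper only uses the special case you actually need --- that summing with a trivial surface knot admitting a single-sheet diagram leaves the coloring count unchanged.
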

\begin{proof}
Since a $(3b(K)-2)$-bridge trisection of $\mS_{2m}(K)$ is given in \cite{MZ17},
we have $b(\mS_{2m}(K) ) \leq 3 b(K) - 2$. Moreover, 
it is shown in Figure~15 and 17 of \cite{MZ17} that $b(P) =2$ and $b(T)=3$,
and hence it follows from
\cite[Subsection~2.2]{MZ17}
that
\[
b(\mS_{2m}(K) \# P) \leq b(\mS_{2m}(K)) + b(P) -1 \leq 3 b(K) -1
\]
and
\[
b(\mS_{2m}(K) \# T) \leq b(\mS_{2m}(K)) + b(T) -1 \leq 3 b(K).
\]

%it is easy to see that the inequalities $\leq$ hold. 

Next, since $P$ and $T$ have broken surface diagrams with a single broken sheet, 
it is obvious that 
\[
\# \Col_X(\mS_{2m}(K))= \# \Col_X(\mS_{2m}(K) \# P) = \# \Col_X(\mS_{2m}(K) \# T).
\]
Moreover, Proposition~\ref{prop: AS} gives
\[
\# \Col_X(\mS_{2m}(K)) = \# \Col_X(T_K).
\]
Combining these equalities with Theorem~\ref{thm: lower bound}, 
we have the opposite inequalities.
\end{proof}
Finally, we show that Theorem~\ref{thm: twist spun} follows from Theorem~\ref{thm: twist spun general}.
Here, we say that a kei $X$ is {\it faithful} if for any two distinct elements $a,b \in X$,
the bijections $*a$ and $*b$ are different maps. Then the following lemma is shown in \cite{Nos11, CSV16}.
\begin{lem}[\text{\cite[Lemma~5.6]{Nos11}, \cite[Lemma~4.4]{CSV16}}]
\label{lem: faithful}
For any faithful kei $X$, we have
$\Col_X(T_K) = \Col_X(K).$
\end{lem}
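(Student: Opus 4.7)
The plan is to show that for any $X$-coloring $C$ of the 1-tangle diagram $T_K$, the two endpoint arcs receive the same color, i.e.\ $C(a_-) = C(a_+)$, where $a_\pm$ denote the two endpoint arcs. Writing $a := C(a_-)$ and $a' := C(a_+)$, I would first derive a trace identity by walking along $K$ through $T_K$ from $a_-$ to $a_+$ and recording the colors $b_1, \ldots, b_k$ of the over-arcs passed under. Because $*b$ is involutive in a kei, each under-crossing transforms the running color by $*b_i$, so the crossing relations yield
\[
a' \;=\; a * b_1 * b_2 * \cdots * b_k.
\]

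Next I would lift this to the inner automorphism group $\mathrm{Inn}(X)$ generated by the bijections $*y \colon x \mapsto x * y$. The identity $*(y * z) = *z \circ *y \circ *z$, a direct consequence of axioms (ii) and (iii), gives iteratively
\[
*a' \;=\; \sigma^{-1} \circ *a \circ \sigma \quad \text{in } \mathrm{Inn}(X),
\]
where $\sigma := *b_1 \circ *b_2 \circ \cdots \circ *b_k$. The element $\sigma$ is the image, via $C$, of the longitude word at the arc of $K$ containing the cut point.

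The central claim is that $\sigma$ commutes with $*a$ in $\mathrm{Inn}(X)$, hence $*a = *a'$. This reflects the fact that for a classical knot $K \subset S^3$ the longitude and meridian commute in $\pi_1(S^3 \setminus K)$ by the torus-boundary relation, and this commutation descends to $\mathrm{Inn}(X)$ via the natural map induced by the coloring. Faithfulness of $X$ then upgrades $*a = *a'$ to $a = a'$, completing the argument.

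The main obstacle will be justifying the commutation of $\sigma$ with $*a$ from the data of a 1-tangle coloring alone, since the Wirtinger relations of $T_K$ do not a priori contain the longitude--meridian relation (the tangle lives in a ball, not in $S^3$). The key observation is that closing $T_K$ into $K$ requires only the trivial 1-tangle $K \cap B$, which has no crossings and so contributes nothing to the longitudinal monodromy; consequently the longitude of $K$ is already read off as the word $\sigma$ computed from $T_K$, and its image in $\mathrm{Inn}(X)$ automatically centralizes $*a$, as required.
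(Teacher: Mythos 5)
The paper does not prove this lemma itself---it is quoted from \cite{Nos11, CSV16}---but your outline does follow the strategy of those references: reduce the statement to showing that the two end arcs of $T_K$ receive the same color, pass to $\mathrm{Inn}(X)$ via $x \mapsto *x$ using the identity $*(y*z) = *z \circ *y \circ *z$, and invoke faithfulness at the end. Your trace formula $a' = a*b_1*\cdots*b_k$ and the conjugation formula $*a' = \sigma^{-1}\circ *a \circ \sigma$ are both correct.

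However, the step you yourself flag as ``the main obstacle'' is not actually resolved by your key observation, and this is where the genuine content of the lemma lives. The coloring $C$ only gives you a homomorphism from the group $G_T$ presented by the arcs of $T_K$ modulo the crossing relations of $T_K$; it does \emph{not} a priori give a homomorphism from $\pi_1(S^3\setminus K)$, because the Wirtinger presentation of the knot group identifies the two end arcs, and $*a = *a'$ is exactly what you are trying to prove. So appealing to the longitude--meridian commutation in $\pi_1(S^3\setminus K)$ and saying it ``descends via the natural map induced by the coloring'' is circular as stated. Your observation that the closing arc has no crossings only shows that the longitude \emph{word} $\sigma$ is readable from $T_K$; it says nothing about why the relation $[\sigma, *a]=1$ is a consequence of the tangle crossing relations alone. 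What is actually needed is the classical fact that in a knot diagram any single Wirtinger relation is a consequence of the remaining ones in the free group on the arcs (the ``relation at infinity,'' coming from the diagram lying in $S^2$). Concretely: assign $*C(x)$ to each arc of the \emph{closed} diagram, using the color $a$ for the rejoined arc; this satisfies every Wirtinger relation except possibly the one at the crossing where the arc colored $a'$ terminates, hence by redundancy satisfies that one as well, which forces $*a = *a'$ directly (and in particular gives your commutation $[\sigma, *a]=1$ as a byproduct). Faithfulness then yields $a = a'$, and together with the trivial inclusion $\Col_X(K) \subseteq \Col_X(T_K)$ this completes the proof. Without citing or proving the redundancy of one Wirtinger relation, your argument has a gap precisely at its central claim.
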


\def\proofname{Proof of Theorem~\ref{thm: twist spun}}
\begin{proof}
Note that $R_p$ is faithful for any odd $p > 1$. 
Therefore, by Lemma~\ref{lem: faithful}, the condition 
$b(K) = \log_{\# R_p} (\# \Col_{R_p}(T_K))$
is equivalent to $b(K)= \log_{\# R_p} (\# \Col_{R_p}(K))$.
Now, Theorem~\ref{thm: twist spun} immediately follows from
Theorem~\ref{thm: twist spun general}.
\end{proof}
\def\proofname{Proof}

%%%%%%%%%%%%%%%%%%%%
% Acknowledgements
%%%%%%%%%%%%%%%%%%%
\begin{center}
\scshape{Acknowledgments}
\end{center}
\vspace{4pt}
 
The first-named author has been supported by 
the Grant-in-Aid for JSPS Fellows (No.\,18J00808), Japan Society for the
Promotion of Science.
The second-named author has been supported in part by the Grant-in-Aid
for Scientific Research (C), (No.\,JP17K05242), Japan Society for the
Promotion of Science.
%%%%%%%%%%%%%%%%
% bibliography
%%%%%%%%%%%%%%%

\bibliographystyle{hplain}
\bibliography{tex}

%%%%%%%%%%%%%%%%%%
\end{document}